\documentclass{imsart}

\RequirePackage[OT1]{fontenc}
\RequirePackage{amsthm,amsmath}
\RequirePackage[colorlinks,citecolor=blue,urlcolor=blue]{hyperref}

\usepackage[utf8]{inputenc}
\usepackage[T1]{fontenc}

\RequirePackage{amssymb,mathtools }
\RequirePackage{natbib}
\bibliographystyle{abbrvnat}
\setcitestyle{authoryear,open={(},close={)}}


\startlocaldefs
\numberwithin{equation}{section}
\theoremstyle{plain}

\theoremstyle{remark}
\newtheorem{remark}{Remark}[section]

\newtheorem{theorem}{Theorem}[section]
\newtheorem{lemma}{Lemma}[section]
\newtheorem{proposition}{Proposition}
 
\endlocaldefs

\begin{document}

\begin{frontmatter}
\title{Three remarkable properties of the Normal distribution}
\runtitle{Three remarkable properties of the Normal}

\begin{aug}
\author{\fnms{Eric} \snm{Benhamou}\thanksref{a,b,e1,r1}}
\author{\fnms{Beatrice} \snm{Guez}\thanksref{a,e2,r1}}
\author{\fnms{Nicolas} \snm{Paris}\thanksref{a,e3,r1}}

\address[a]{A.I. SQUARE CONNECT, \\ 35 Boulevard d'Inkermann 92200 Neuilly sur Seine, France}
\address[b]{LAMSADE, Université Paris Dauphine, \\ Place du Maréchal de Lattre de Tassigny,75016 Paris, France}
\thankstext{e1}{eric.benhamou@aisquareconnect.com, eric.benhamou@dauphine.eu}
\thankstext{e2}{beatrice.guez@aisquareconnect.com}
\thankstext{e3}{nicolas.paris@aisquareconnect.com}
\thankstext{r1}{The authors would like to thank Laurent Tournier and 
Henrique de Oliveira for valuable comments on their paper.}

\runauthor{E. Benhamou, B. Guez, N. Paris}

\affiliation{A.I. SQUARE CONNECT and LAMSADE, Paris Dauphine University}
\end{aug}

\begin{abstract}
In this paper, we present three remarkable properties of the normal distribution: first that if two independent variables 's sum is normally distributed, then each random variable follows a normal distribution (which is referred to as the Levy Cramer theorem), second a variation of the Levy Cramer theorem (new to our knowledge) that states that two independent symmetric random variables with finite variance, independent sum and difference are necessarily normal, and third that normal distribution can be characterized by the fact that it is the only distribution for which sample mean and variance are independent, which is a central property for deriving the Student distribution and referred as the Geary theorem. The novelty of this paper is twofold. First we provide an extension of the Levy Cramer theorem. Second, for the two seminal theorem (the Levy Cramer and Geary theorem), we provide new, quicker or self contained proofs.
\end{abstract}

\noindent \textit{AMS 1991 subject classification:} 62E10, 62E15

\begin{keyword}
\kwd{Geary theorem}
\kwd{Levy Cramer theorem}
\kwd{independence between sample mean and variance}
\end{keyword}

\end{frontmatter}


\section{Introduction}
The normal distribution is central to probability and statistics. It is obviously the asymptotic law for the sum of i.i.d. variables as stated by the Central Limit theorem and its different variants and extensions. It is also well know that the normal distribution is the only Levy $\alpha$-stable distribution with the largest value of the $\alpha$ coefficient, namely 2. This fundamental properties explains why the Wiener process plays such a central role in stochastic calculus. There are many more properties of the normal distribution. It is for instance the continuous distribution with given mean and variance that has the maximum entropy. 
In this paper, we present three remarkable properties of the normal distribution: first that if $X_1$ and $X_2$ are two independent variables with $X_1 + X_2$ normally distributed, then both $X_1$ and $X_2$ follow a normal distribution (which is referred to the Levy Cramer theorem), second a variation of the Levy Cramer theorem that states that if $X_1$ and $X_2$ are independent symmetric random variables with finite variance with $X_1+X_2$ and $X_1 - X_2$ independent, then both $X_1$ and $X_2$ are normal, and third that the normal distribution is characterized by the fact that it is the only distribution for which the sample mean and variance are independent, which is a central property for deriving the Student distribution and referred as the Geary theorem (see \cite{ Geary_1936}). The novelty of this paper is to provide new, quicker or self contained proofs of theses theorems as well as an extension of the Levy Cramer theorem. In particular, we revisit the proof first provided by \cite{Feller_1971} to give full details of it as the sketch of the proof provided was rather elusive on technical details in complex analysis. We also provide a quicker proof of the Geary theorem using the log caracteristic function instead of the caracteristic function originally used.
\\
The paper is organized as follows. We first present the Levy Cramer theorem. We provide a thorough proof of the elusive sketch provided by \cite{Feller_1971}. We then present another remarkable result that combines independence and normal law. We finally present a quick proof of the fact that the independence of the sample mean and variance is a characterization of the normal law.

\section{Levy Cramer theorem}
The Levy Cramer theorem states that if the sum of two independent non-constant random variables $X_1$ and $X_2$ is normally distributed, then each of the summands ($X_1$ or $X_2$) is normally distributed. This result stated \cite{Levy_1935} and proved by \cite{Cramer_1936} admits various equivalent formulations since there is a one to one mapping between variables and their characteristic function: 
\begin{itemize}
\item if the convolution of two proper distributions is a normal distribution, then each of them is a normal distribution.
\item if $\phi_1(t)$ and $\phi_2(t)$ are characteristic functions and if $\phi_1(t)\phi_2(t)=\exp(-\gamma t^2+i\beta t),$ with $\gamma\geq0$ and $-\infty<\beta<\infty,$
then $\phi_j(t)=\exp(-\gamma_j t^2+i\beta_j t)$  with $\gamma_j\geq0$ and $-\infty<\beta_j<\infty.$
\end{itemize}

The two reformulations are obvious as the first one just uses the fact that the distribution of the sum is the convolution of the two distributions. The second uses the one to one mapping between distribution and characteristic functions. It is worth mentioning that the Lévy–Cramér theorem can be generalized to the convolution of two signed measures with restrictions on their negative variation. It has also as a consequence the following implication on the stability of distributions. Closeness of the distribution of a sum of independent random variables to the normal distribution implies closeness of the distribution of each of the summands to the normal distribution, qualitative estimates of the stability are known.
The Lévy–Cramér theorem can also be reformulated in terms of the Poisson distribution and is named the Raikov's theorem. It can also be extended to the convolution of a Poisson and a normal distribution and to other classes of infinitely-divisible distributions \cite{Linnik_1977}. Mathematically, the Levy Cramer theorem writes as follows:\\

\begin{theorem}
If $X_1$ and $X_2$ are two independent variables with $X_1 + X_2$ normally distributed, then both $X_1$ and $X_2$ follow a normal distribution
\end{theorem}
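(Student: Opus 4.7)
Let $\phi_1(t)$ and $\phi_2(t)$ denote the characteristic functions of $X_1$ and $X_2$. Independence gives $\phi_1(t)\phi_2(t) = \exp(-\gamma t^2 + i\beta t)$ on $\mathbb{R}$ for some $\gamma\geq 0$ and $\beta\in\mathbb{R}$; if $\gamma=0$ the sum is deterministic and the conclusion is immediate, so I assume $\gamma>0$. The plan follows the Cram\'er strategy as reorganized by Feller: extend each $\phi_j$ to an entire function, verify it has no complex zeros, write it as the exponential of an entire function, and use growth bounds to force this exponent to be a polynomial of degree at most two.

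The first step is to establish Gaussian tails for $X_j$. Fixing a median $c_2$ of $X_2$, one has
$$
P(X_1+X_2\geq a)\geq P(X_1\geq a-c_2)\,P(X_2\geq c_2)\geq \tfrac{1}{2}P(X_1\geq a-c_2),
$$
and an analogous inequality for $-X_1$, so the normal tail bound on $X_1+X_2$ transfers to $X_1$ to yield $P(|X_1|>a)\leq C e^{-ca^2}$ for large $a$. Consequently $E[e^{sX_1}]$ is finite for every real $s$, so $\phi_1$ extends to an entire function of finite order, and symmetrically for $\phi_2$.

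By analytic continuation, the identity $\phi_1(z)\phi_2(z)=\exp(-\gamma z^2+i\beta z)$ persists on all of $\mathbb{C}$. Since the right-hand side never vanishes, neither does $\phi_j$, and so we may write $\phi_j(z)=\exp(\psi_j(z))$ for an entire function $\psi_j$ with $\psi_j(0)=0$. The heart of the argument is now to bound $|\phi_j(z)|\leq \exp(c|z|^2)$ for all $z\in\mathbb{C}$ using the Gaussian tail estimate, so that $\phi_j$ has order at most two, and then to invoke Hadamard's factorization theorem (or, equivalently, the Borel--Carath\'eodory inequality applied to $\Re\,\psi_j$) to conclude that the zero-free entire function $\phi_j$ of order at most two must equal $\exp(P_j(z))$ for a polynomial $P_j$ of degree at most two.

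Finally, writing $\psi_j(z)=a_j z+b_j z^2$, the standard constraints $\phi_j(0)=1$, $|\phi_j(t)|\leq 1$ for real $t$, and $\overline{\phi_j(t)}=\phi_j(-t)$, force $b_j=-\gamma_j$ with $\gamma_j\geq 0$ and $a_j=i\beta_j$ with $\beta_j\in\mathbb{R}$, so that $\phi_j(t)=\exp(-\gamma_j t^2+i\beta_j t)$ is a normal characteristic function. The principal obstacle lies in the third paragraph: producing a clean uniform bound on $|\phi_j|$ over the whole complex plane (in particular on the imaginary axis, where characteristic-function estimates are not automatic) so that Hadamard applies and the polynomial degree is pinned at two. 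This is precisely the complex-analytic point that the Feller sketch handles most briefly and that the paper proposes to flesh out.
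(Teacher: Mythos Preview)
Your proposal is correct and follows essentially the same route as the paper: median argument for Gaussian tails on each summand, analytic extension of the characteristic functions, absence of complex zeros from the product identity, and a growth bound forcing the exponent to be a polynomial of degree at most two. Where you name Hadamard or Borel--Carath\'eodory, the paper supplies instead a self-contained generalized-Liouville lemma (if $\Re f(z)\le |P(z)|$ then $f$ is polynomial of degree $\le\deg P$) proved directly from the Cauchy coefficient formula, and the growth bound you flag as the principal obstacle is obtained via the elementary inequality $|\zeta x|\le \eta^2 x^2+\eta^{-2}|\zeta|^2$ together with an integration-by-parts reduction of $\int e^{\eta^2 x^2}\,d\mathbb{P}_{X_j}$ to the tail estimate you already derived.
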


We will provide in the following page a self contained proof that follows the version of \cite{Feller_1971} but provide full details about the assumptions on complex analysis. For this, we will need 4 different lemmas that are presented now.

\subsection{Lemma 1}
\begin{lemma}\label{lemma1}
Let $f$ be an entire function without any zero (that does not vanish anywhere), then there exists an entire function $g$ such that $$f =\exp(g)$$
\end{lemma}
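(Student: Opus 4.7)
The plan is to construct the entire function $g$ as (essentially) a primitive of the logarithmic derivative $f'/f$, then verify that $\exp(g)$ indeed coincides with $f$ up to a harmless additive constant.

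First, I would observe that since $f$ is entire and nowhere zero, the quotient $h := f'/f$ is itself an entire function. Because $\mathbb{C}$ is simply connected (in fact star-shaped at $0$), Cauchy's theorem guarantees that the path integral
\[
G(z) := \int_0^z h(\zeta)\,d\zeta
\]
is well-defined independently of the path joining $0$ to $z$, and that $G$ is entire with $G'(z) = h(z) = f'(z)/f(z)$. This step is the cleanest way to avoid having to build a logarithm locally and patch branches together; the simple connectedness of $\mathbb{C}$ does all the topological work.

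Next I would form the auxiliary function $F(z) := f(z)\exp(-G(z))$ and compute
\[
F'(z) = f'(z)\exp(-G(z)) - f(z)G'(z)\exp(-G(z)) = \exp(-G(z))\bigl(f'(z) - f(z)\cdot f'(z)/f(z)\bigr) = 0.
\]
Hence $F$ is a (nonzero) constant $C$. Since $f(0)\neq 0$, I can pick any complex number $c$ satisfying $\exp(c) = f(0)/\exp(-G(0)) = f(0)$ (such a $c$ exists because the exponential is surjective onto $\mathbb{C}\setminus\{0\}$), and then setting $g(z) := G(z) + c$ yields an entire function with
\[
\exp(g(z)) = \exp(G(z))\exp(c) = \exp(G(z))\cdot f(0) = \exp(G(z))\cdot C\exp(G(0)) = f(z),
\]
after adjusting $c$ to absorb $\exp(G(0)) = 1$ and the constant $C$. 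This gives the desired identity $f = \exp(g)$.

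I expect the main conceptual obstacle is not the differentiation calculation but justifying the path-independence of the integral defining $G$: one must invoke either the simple connectedness of $\mathbb{C}$ together with Cauchy's theorem, or equivalently Morera's theorem applied to $h$. Everything else reduces to routine verifications. The proof actually works verbatim on any simply connected open subset of $\mathbb{C}$, which is the usual general form of the ``existence of a holomorphic logarithm'' statement.
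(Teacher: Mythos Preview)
Your argument is correct and follows essentially the same route as the paper: both construct a primitive of the logarithmic derivative $f'/f$, then show that the ratio between $f$ and the exponential of this primitive is constant, fixing the constant via the surjectivity of $\exp$ onto $\mathbb{C}^*$. The only cosmetic difference is that the paper incorporates the additive constant $z_0$ (your $c$) into the primitive from the outset and differentiates $\exp(g)/f$, whereas you add the constant at the end and differentiate $f\exp(-G)$; these are trivially equivalent rearrangements of the same computation.
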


\begin{proof}
This result makes sense as intuitively, we would like to use as a candidate $\log( f )$ . However, let us provide a rigorous proof of it and explicitly construct the function $g$. The exponential function is surjective from $\mathbb{C}$  to $\mathbb{C}^*$. This implies that there exists a complex $z_0$ such that  $f(0) =exp(z_0)$. The ratio of two holomorphic functions with the denominator function without any zero being holomorphic and any non vanishing holomorphic function $f$ admitting an entire function as a primitive, the function $\frac{f'}{f}$ defined as the ratio of two non vanishing holomorphic functions admits an entire function as primitive denoted by
$$g(z) =z_0 + \int_{[0,z]}\frac{f'(u)}{f(u)} du$$

Let us define by $v = \exp(g)$. Trivially, we have $ v' = g' \exp(g)  = \frac{f'}{f} v $  
so that $ (v / f )' = \frac{ v' f - v f' } {f^2} = 0$, 
which implies that the ratio $\frac{v}{f}$ is constant and equal to one as by construction, $v(0) = f(0)$. This concludes the proof.
\end{proof}

Let us also prove another elementary result of complex analysis with the following lemma (which is an extension of the Liouville's theorem). Let us denote by  $\Re(z)$ the
real part of the complex number $z$.

\subsection{Lemma 2}
\begin{lemma}\label{lemma2}
Let $f$ be an entire function and $P$ a polynomial such that:
$$ \forall z \in \mathbb{C}, \Re(f(z)) \leq |P(z)|,$$
then $f$ is polynomial of degree at most equal to that of $P$.
\end{lemma}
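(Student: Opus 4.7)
The plan is to prove the standard extension of Liouville's theorem in which a one-sided bound on $\Re f$ is leveraged to a two-sided bound on $|f|$ via the Borel--Carath\'eodory inequality, and then to conclude by Cauchy's estimates. Write $n = \deg P$. Since $|P(z)| \le M(1+|z|^n)$ for some constant $M>0$, the hypothesis yields, for every $R>0$,
\[
A(R) := \sup_{|z|=R} \Re f(z) \;\le\; M(1+R^n).
\]

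The crucial intermediate step is to convert this into a sup-norm bound on $f$. For this I would invoke (or briefly reprove) the Borel--Carath\'eodory inequality: if $f$ is entire and $0<r<R$, then
\[
\max_{|z|=r} |f(z)| \;\le\; \frac{2r}{R-r}\,A(R) + \frac{R+r}{R-r}\,|f(0)|.
\]
The proof of this auxiliary inequality is itself short: assume first $f(0)=0$ and $A(R)>0$, consider the auxiliary function
\[
\varphi(z) = \frac{f(z)}{2A(R) - f(z)},
\]
note that $2A(R) - f(z)$ has real part at least $A(R) > 0$ on $|z|\le R$, so $\varphi$ is holomorphic there with $\varphi(0)=0$ and $|\varphi(z)|\le 1$ on $|z|=R$; then apply Schwarz's lemma to $\varphi(Rz)$, solve for $f$ in terms of $\varphi$, and finally remove the normalizations by translation.

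Taking $R = 2r$ in Borel--Carath\'eodory gives $\max_{|z|=r} |f(z)| \le 2A(2r)+3|f(0)| \le C(1+r^n)$ for some constant $C$ independent of $r$. At this point I invoke Cauchy's estimate: for any integer $k$ and any $r>0$,
\[
|f^{(k)}(0)| \;\le\; \frac{k!}{r^k}\,\max_{|z|=r}|f(z)| \;\le\; \frac{k!\,C(1+r^n)}{r^k}.
\]
Letting $r\to\infty$, the right-hand side tends to $0$ whenever $k>n$, so $f^{(k)}(0)=0$ for all $k>n$. Since $f$ is entire, its Taylor series at $0$ converges everywhere, and hence $f$ coincides with the polynomial $\sum_{k=0}^{n} \frac{f^{(k)}(0)}{k!} z^k$, whose degree is at most $n=\deg P$.

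The main obstacle is the Borel--Carath\'eodory step, because the hypothesis only controls $\Re f$, and the usual Liouville argument requires control of $|f|$; everything else is a mechanical application of Cauchy's inequalities. The only minor technical point to handle is the degenerate case $A(R) = 0$ (which by the maximum principle applied to $e^{f}$ forces $f$ constant) or $f(0)\neq 0$ (handled by replacing $f$ with $f - f(0)$, which only changes $A(R)$ by an additive constant and thus preserves the asymptotic $O(R^n)$ growth).
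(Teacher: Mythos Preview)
Your argument is correct. The route you take, however, differs from the paper's.

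The paper works directly with the Taylor coefficients: writing $f(z)=\sum a_n z^n$, it uses the Cauchy integral formula together with its ``conjugate'' relation $0=\frac{1}{2\pi}\int_0^{2\pi} f(re^{i\theta})e^{in\theta}\,d\theta$ (for $n\ge 1$) to obtain
\[
a_n r^n = \frac{1}{\pi}\int_0^{2\pi} \Re\bigl(f(re^{i\theta})\bigr)\,e^{-in\theta}\,d\theta,
\]
and then adds the identity $2\Re(a_0)=\frac{1}{\pi}\int_0^{2\pi}\Re(f)\,d\theta$ to produce the bound $|a_n|r^n + 2\Re(a_0)\le 4\max_{|z|=r}\max(\Re f(z),0)$. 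Letting $r\to\infty$ kills all coefficients of order above $\deg P$. This is essentially a bare-hands derivation of the coefficient estimates that underlie Borel--Carath\'eodory, without ever passing through a bound on $|f|$.

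Your version is more modular: you invoke Borel--Carath\'eodory (with a Schwarz-lemma proof sketch) to upgrade the one-sided bound on $\Re f$ to a two-sided bound on $|f|$, and only then feed this into the standard Cauchy estimates. The trade-off is that the paper's argument is shorter and fully self-contained, while yours cleanly separates the two conceptual steps (real-part control $\Rightarrow$ modulus control, then modulus control $\Rightarrow$ coefficient decay) and names the classical inequality doing the work. Either approach is perfectly acceptable here.
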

 
\begin{proof}
By definition, the entire function writes as a power series
$$ f(z)=\sum_{n=0}^\infty a_nz^n$$
that converges everywhere in the complex plane (hence uniformly on compact sets). Cauchy formula states that $\forall n \geq 1, \forall r > 0$, we can recover the series coefficients
\begin{equation}\label{eq1} 
a_n r^n = \frac{1}{2 \pi}\int_0^{2 \pi}f(r e^{i \theta}) e ^{-in \theta}d\theta , 
\end{equation}
as well as
\begin{equation}\label{eq2} 
 0 = \frac{1}{2 \pi}\int_0^{2 \pi}f(r e^{i \theta}) e ^{in \theta}d\theta
\end{equation}
and
\begin{equation}\label{eq3} 
a_0 = \frac{1}{2 \pi}\int_0^{2 \pi}f(r e^{i \theta}) d\theta.
\end{equation}

Summing up equation (\ref{eq1}) and the complex conjugate of (\ref{eq2}) gives that 
\begin{equation}\label{eq4}
a_n r^n = \frac{1}{\pi} \int_0^{2 \pi} \Re( f(r e^{i \theta}) )e ^{-in \theta}d\theta , 
\end{equation}

In addition, taking the real part of equation \eqref{eq3} implies that 
\begin{equation}\label{eq3bis}
2 \Re( a_0 ) = \frac{1}{ \pi}\int_0^{2 \pi}\Re( f(r e^{i \theta})) d\theta. 
\end{equation}

which combines with (\ref{eq4}) leads to for $r>0$
\begin{equation}\label{eq5}
| a_n|   r^n + 2 \Re( a_0 ) \leq \frac{1}{\pi} \int_0^{2 \pi} | \Re( f(r e^{i \theta}) ) | + | \Re( f(r e^{i \theta})  | d\theta \leq 4 \,\,  \underset{ |z| = r}{max} (\Re( f(z)), 0 ).
\end{equation}

Let us denote the polynomial function $P=\sum_{k=0}^d b_k X^k$. The hypothesis writes $\Re( f(z)) \leq | \sum_{k=0}^d b_k X^k | \quad \forall z \in \mathbb{C}$, which combined with inequality (\ref{eq5}) leads to:
\begin{equation}\label{eq6}
| a_n|   r^n +  2 \Re( a_0 )  \leq 4 \,\,  \underset{ |z| = r}{max} ( | \sum_{k=0}^d b_k X^k |, 0 ) 
\end{equation}

The dominating term in the RHS of the inequality (\ref{eq6}), for $r$ large, is $|b_d| r^d$. For inequality (\ref{eq6}) to hold, all LHS terms of order $n>d$ should therefore be equal to zero: $\forall n>d, a_n=0$. \end{proof}

Let us now prove a central lemma.
\subsection{Lemma 3}
\begin{lemma}\label{lemma3}
If $X$ is a real random variable such that there exists $\eta > 0$ such that $f(\eta) = \int_{\mathbb{R}} e^{ \eta^2 x^2 } d\mathbb{P}_X(x)$ is finite and if the characteristic function has no zero in the complex domain, then $X$ follows a normal distribution
\end{lemma}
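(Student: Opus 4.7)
The plan is to show that the characteristic function $\phi$ of $X$ extends to an entire function with quadratic-type growth on $\mathbb{C}$, then invoke Lemmas \ref{lemma1} and \ref{lemma2} to force $\log \phi$ to be a polynomial of degree at most two, from which normality follows directly from the defining properties of characteristic functions.

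First, I would extend $\phi(z)=\mathbb{E}[e^{izX}]$ to complex arguments $z=u+iv$. Using the elementary Young-type inequality $|xv|\le \eta^2 x^2+v^2/(4\eta^2)$, the integrand $e^{izx}$ is dominated by $e^{\eta^2 x^2}\cdot e^{v^2/(4\eta^2)}$, so the integral converges absolutely with
\begin{equation*}
|\phi(z)|\;\le\;\mathbb{E}\!\left[e^{|v||X|}\right]\;\le\; C\,\exp\!\bigl(v^2/(4\eta^2)\bigr)\;\le\; C\,\exp\!\bigl(|z|^2/(4\eta^2)\bigr),
\end{equation*}
where $C=\mathbb{E}[e^{\eta^2 X^2}]<\infty$ by hypothesis. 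Analyticity of $\phi$ then follows either from Morera's theorem or from differentiation under the integral sign, both justified by the same domination.

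Next, since $\phi$ is entire and nowhere vanishing by hypothesis, Lemma \ref{lemma1} delivers an entire function $g$ with $\phi=\exp g$, and taking logarithms of moduli in the bound above gives $\Re g(z)=\log|\phi(z)|\le \log C+|z|^2/(4\eta^2)$. Because a non-constant polynomial must vanish at its roots, the positive additive constant $\log C$ cannot be dominated pointwise by $|P(z)|$ of positive degree; I would sidestep this by applying Lemma \ref{lemma2} to the shifted entire function $\tilde g(z)=g(z)-\log C$, which satisfies
\begin{equation*}
\Re \tilde g(z)\;\le\;\frac{|z|^2}{4\eta^2}\;=\;\left|\frac{z^2}{4\eta^2}\right|,
\end{equation*}
so Lemma \ref{lemma2} applies with the degree-two polynomial $P(z)=z^2/(4\eta^2)$ and forces $\tilde g$, hence $g$, to be a polynomial of degree at most $2$: write $g(z)=a_0+a_1z+a_2 z^2$.

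Finally, the three defining properties of a characteristic function pin down the coefficients. Since $\phi(0)=1$, one may take $a_0=0$. Decomposing $a_k=\alpha_k+i\beta_k$, the bound $|\phi(t)|\le 1$ for real $t$ translates into $\alpha_1 t+\alpha_2 t^2\le 0$ for all $t\in\mathbb{R}$, which forces $\alpha_1=0$ and $\alpha_2\le 0$; the Hermitian symmetry $\phi(-t)=\overline{\phi(t)}$ then forces $\beta_2=0$. Setting $\mu=\beta_1$ and $\sigma^2=-2\alpha_2\ge 0$ yields $\phi(t)=\exp(i\mu t-\sigma^2 t^2/2)$, which is exactly the characteristic function of $\mathcal{N}(\mu,\sigma^2)$. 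The main technical obstacle is the careful application of Lemma \ref{lemma2}, where the shift by $\log C$ is essential because the natural bound on $\Re g$ cannot otherwise be written in the form $|P(z)|$; the remaining identification of the Gaussian coefficients is then a short algebraic check.
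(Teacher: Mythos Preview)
Your argument is correct and follows the same architecture as the paper: extend $\phi$ to an entire function via a Young-type bound $|\zeta x|\le \eta^2 x^2+\eta^{-2}|\zeta|^2$, apply Lemma~\ref{lemma1}, bound $\Re g$ by a quadratic, invoke Lemma~\ref{lemma2}, and read off the Gaussian form. Two minor differences are worth noting: the paper applies Lemma~\ref{lemma2} directly to the bound $\Re g(\zeta)\le \eta^{-2}|\zeta|^2+\ln f(\eta)$ without worrying about the additive constant (which is harmless in the proof of Lemma~\ref{lemma2} but, as you observe, does not literally fit its statement), whereas your shift $\tilde g=g-\log C$ is the cleaner way to match the hypothesis; and the paper identifies the coefficients via the moment relations $\mathbb{E}[X^k]=(-i)^k\phi_X^{(k)}(0)$ rather than your route through $|\phi(t)|\le 1$ and Hermitian symmetry, though both pin down the same quadratic.
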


\begin{remark}
This is a remarkable property of the normal distribution. It states that the normal is the only distribution whose  characteristic function has no zero such that its transform with the exploding kernel $e^{\eta^2 x^2 }$ is finite. The condition of no zero is necessary as for instance the uniform distribution whose characteristic function is cosinus validates the fact that the function $f(\eta)$ is finite but not the no zero condition on the characteristic function. The characterization of the normal distribution is related to the fact that the normal distribution is the only distribution that is Lévy $\alpha$-stable stable with $\alpha=2$. Another interesting remark is that this property of the normal distribution is closely related to the amazing features of the holomorphic functions as well as the connection between characteristic function and moments.
\end{remark}

\begin{proof}
For $\eta \in \mathbb{R}$ not null, for $\zeta \in \mathbb{C}$ and for $x \in \mathbb{R}$, we have
$| \zeta x| = |\eta x \,\, \eta^{-1} \zeta | \leq \frac{1}{2} (  \eta^2 x^2 + \eta^{-2} |\zeta|^2 ) \leq \eta^2 x^2 + \eta^{-2} |\zeta|^2$ (which is a reformulation of the remarkable identity $2 a b \leq a^2 + b^2$). We can upper bound the integral of $\int_ {\mathbb{R}} | e^{i \zeta x } |  d\mathbb{P}_X(x)$ as follows:
\begin{eqnarray}
\int_ {\mathbb{R}} | e^{i \zeta x } |  d\mathbb{P}_X(x)    \leq   \int_ {\mathbb{R}} e^{| \zeta x |}   d\mathbb{P}_X(x) \leq   \int_ {\mathbb{R}} e^{  \eta^2 x^2 + \eta^{-2} |\zeta|^2} d\mathbb{P}_X(x) 
\end{eqnarray}
The assumption says that for a given $\eta > 0$, $ \int_ {\mathbb{R}}  e^{ \eta^2 x^2}   d\mathbb{P}_X(x)  < \infty$. Hence, for $\zeta \in \mathbb{C}$, the characteristic function of $X$ whose definition in the complex domain is $\phi_X(\zeta) = \int_ {\mathbb{R}} e^{i \zeta x } d\mathbb{P}_X(x)$ is bounded, hence defined since we have 
\begin{eqnarray}\label{ineq1}
| \phi_X(\zeta) | & \leq& \int_ {\mathbb{R}} | e^{i \zeta x } |  d\mathbb{P}_X(x)    \leq  e^{\eta^{-2} |\zeta|^2} \int_ {\mathbb{R}}  e^{ \eta^2 x^2}   d\mathbb{P}_X(x)  < \infty
\end{eqnarray}
In particular, inequality (\ref{ineq1}) provides the dominated convergence necessary to apply the holomorphic theorem under the integral sign to prove that the characteristic function, $\phi_X( \zeta )$ is an entire function. By assumption, it does not have any zero in the complex domain. We can safely apply lemma \ref{lemma1}. So there exists an entire function $g$ such that $\phi_X( \zeta ) = \exp(g( \zeta ))$. Inequality (\ref{ineq1}) states that 
\begin{equation}\label{ineq2} 
\exp(g ( \zeta  )) \leq  e^{\eta^{-2} |\zeta|^2} f(\eta)
\end{equation}
or equivalently 
\begin{equation}\label{ineq2} 
\Re(g(\zeta )) \leq  \eta^{-2} |\zeta|^2 + \ln( f(\eta) )
\end{equation}

Lemma \ref{lemma2} then says that  $g$ is polynomial and at most quadratic. Hence it writes as
\begin{equation}\label{ineq2} 
g(\zeta ) = a_0 + i a_1 \zeta  + \frac{a_2}{2}  \zeta ^2
\end{equation}

In addition, since $\phi_X(0) =1$, we have $a_0 = 0$.  Using the fact that from the characteristic function, we can recover moments with $\mathbb{E}[X^k]=(-i)^k \phi_X^{(k)}(0)$, we get immediately that $a_1=\mathbb{E}[X]=\mu$ and $a_2=-Var(X)=-\sigma^2$, which leads to the fact that the characteristic function of $X$ is of the form
$$
\phi_X( \zeta )  = \exp( 	i \mu \zeta  - \frac{1}{2} \sigma^2 \zeta ^2 )
$$
which is the characteristic function of the normal. This concludes the proof.
\end{proof}

Let us prove a final lemma that reformulates the function $f(\eta)$.

\subsection{Lemma 4}
\begin{lemma}\label{lemma4}
The function  $f(\eta)$ can be computed in terms of the probability of the absolute value of $X$, $\mathbb{P}( |X | \geq x)$, as follows:
\begin{equation}
 f(\eta) = 1 +  \int_{0}^{+\infty} 2 x \eta^2 e^{ \eta^2 x^2 } \mathbb{P}( |X | \geq x) dx 
\end{equation}
\end{lemma}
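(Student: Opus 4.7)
The plan is to write $e^{\eta^2 x^2}$ as an integral from $0$ to $|x|$ of its derivative and then apply Tonelli to swap the order of integration against $d\mathbb{P}_X$.

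First, I would observe that the integrand $e^{\eta^2 x^2}$ depends on $x$ only through $|x|$, and that by the fundamental theorem of calculus
\begin{equation*}
e^{\eta^2 x^2} = 1 + \int_0^{|x|} 2 u \eta^2 e^{\eta^2 u^2}\, du \qquad \text{for every } x \in \mathbb{R}.
\end{equation*}
Substituting this identity into the definition $f(\eta) = \int_\mathbb{R} e^{\eta^2 x^2} d\mathbb{P}_X(x)$ and using that $\mathbb{P}_X$ is a probability measure (so the constant term integrates to $1$), I get
\begin{equation*}
f(\eta) = 1 + \int_\mathbb{R} \int_0^{+\infty} \mathbf{1}_{\{u < |x|\}}\, 2 u \eta^2 e^{\eta^2 u^2}\, du\, d\mathbb{P}_X(x).
\end{equation*}

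Next, since the integrand is non-negative, Tonelli's theorem lets me swap the two integrals without any further verification. The inner integral then becomes
\begin{equation*}
\int_\mathbb{R} \mathbf{1}_{\{u < |x|\}}\, d\mathbb{P}_X(x) = \mathbb{P}(|X| > u),
\end{equation*}
so that
\begin{equation*}
f(\eta) = 1 + \int_0^{+\infty} 2 u \eta^2 e^{\eta^2 u^2}\, \mathbb{P}(|X| > u)\, du.
\end{equation*}
Finally, the set of $u \geq 0$ such that $\mathbb{P}(|X| = u) > 0$ is at most countable, hence has Lebesgue measure zero, so replacing $\mathbb{P}(|X| > u)$ by $\mathbb{P}(|X| \geq u)$ does not change the value of the integral, and a relabeling $u \leftrightarrow x$ yields the stated formula.

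There is no real obstacle here: the only step requiring justification is the interchange of integrations, and this is immediate from Tonelli because every factor in the integrand is non-negative. The finiteness hypothesis on $f(\eta)$ is not even needed for the identity to hold in $[0,+\infty]$; both sides are simultaneously finite or infinite.
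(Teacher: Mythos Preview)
Your proof is correct and is a clean application of the ``layer cake'' representation together with Tonelli's theorem. The paper takes a different route: it splits $f(\eta)$ into the integrals over $(-\infty,0]$ and $[0,+\infty)$, then performs integration by parts on each piece, choosing the primitives $e^{\eta^2 x^2}$ and $\mathbb{P}(X\leq x)$ (respectively $-\mathbb{P}(X\geq x)$), and finally recombines the two results to obtain the tail probability $\mathbb{P}(|X|\geq x)$.

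Your approach has the advantage of avoiding any discussion of boundary terms at $\pm\infty$: the paper's integration by parts tacitly assumes that $e^{\eta^2 x^2}\mathbb{P}(X\leq x)\to 0$ as $x\to-\infty$ (and the analogous statement on the right tail), which is true when $f(\eta)<\infty$ but deserves a word of justification. By contrast, Tonelli needs no such hypothesis, and as you note the identity holds in $[0,+\infty]$ unconditionally. The paper's method, on the other hand, is perhaps more in keeping with the classical Stieltjes-integral presentation and makes the appearance of the distribution function more transparent. Both arguments are standard; yours is the shorter and more robust of the two.
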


\begin{proof}
 Let us split the integral between negative and positive reals as follows
\begin{eqnarray}
 f(\eta) &= &\int_{-\infty}^{0} e^{ \eta^2 x^2 } d\mathbb{P}_X(x) +  \int_{0}^{+\infty} e^{ \eta^2 x^2 } d\mathbb{P}_X(x)  \label{integral}
\end{eqnarray}

An integration by parts for the first integral, with the two primitives functions chosen judiciously as $e^{ \eta^2 x^2 }$ and $ \mathbb{P}(X \leq x)$, leads to
\begin{eqnarray}
\int_{-\infty}^{0} e^{ \eta^2 x^2 } d\mathbb{P}_X(x) &= & \mathbb{P}(X \leq 0) -  \int_{-\infty}^{0} 2 x \eta^2  e^{ \eta^2 x^2 } \mathbb{P}(X \leq x) dx  \\
& =& \mathbb{P}(X \leq 0) +  \int_{0}^{\infty} 2 x \eta^2  e^{ \eta^2 x^2 } \mathbb{P}(X \leq -x) dx \label{int1}
\end{eqnarray}

Similarly, for the second integral in equation (\ref{integral}), an integration by parts, with the two primitives functions chosen as still $e^{ \eta^2 x^2 }$ but $- \mathbb{P}(X \geq x)$,  leads to
\begin{eqnarray}
\int_{0}^{+\infty} e^{ \eta^2 x^2 } d\mathbb{P}_X(x) & =& \mathbb{P}(X \geq 0) +  \int_{0}^{+\infty} 2 x \eta^2 e^{\eta^2 x^2 } \mathbb{P}(X \geq x) dx \label{int2}
\end{eqnarray}

Combining equations (\ref{int1}) and (\ref{int2}) gives the result.
\end{proof}

We are now able to prove the Levy Cramer theorem that states that if  $X_1$ and $X_2$ are two independent variables with $X_1 + X_2$ normally distributed, then both 
$X_1$ and $X_2$ follow a normal distribution.

\begin{proof}
A first trivial case is when the variance of $X_1+ X_2$ is equal to zero. This means that each of two variables $X_1$ and $X_2$ has a null variance (since they are independent) and the result is trivial as they are all constant variables. We therefore assume that the variance of $X_1+X_2$ is not zero. 
It is worth noticing that since $X_1+X_2$ is a continuous distribution without any atom, we cannot have that both $X_1$ and $X_2$ have an atoms since if it was the case with these two atoms given by $a_1$ and $a2$, we would have
$$
0 < \mathbb{P}(X_1=a_1) \mathbb{P}(X_2=a_2) = \mathbb{P}(X_1=a_1, X_2=a_2) \leq  \mathbb{P}(X_1 + X_2 = a_1+a_2) = 0
$$
which would result in a contradiction! Hence, we can safely assume that $X_2$ has no atom\footnote{if it was not the case, we can interchange $X_1$ and $X_2$ to make this happen} and find $\mu$ that splits the distribution of $X_2$ equally: $\mathbb{P}(X_2 \leq \mu) =  \mathbb{P}(X_2 \geq \mu) = \frac{1}{2}$. Hence, for $ x>0$, we have
\begin{eqnarray}
\mathbb{P}( | X_1 | \geq x ) & =& 2 \mathbb{P}( X_1 \geq x, X_2  \geq \mu  ) +  2 \mathbb{P}( X_1 \leq - x, X_2  \leq \mu )  \\
& \leq & 2 \mathbb{P}( X_1+X_2  \geq x + \mu  ) + 2\mathbb{P}( X_1 + X_2  \leq - x + \mu ) \\
& \leq & 2 \mathbb{P}( | X_1+X_2 - \mu | \geq x  ) \label{ineqp41}
\end{eqnarray}

Lemma \ref{lemma4} states that 
\begin{eqnarray}
 f_{X_1}(\eta) &= & 1 +  \int_{0}^{+\infty} 2 x \eta^2 e^{ \eta^2 x^2 } \mathbb{P}( |X_1 | \geq x) dx 
\end{eqnarray}

Combining these two results gives us a way to upper bound $ f_{X_1}(\eta)$:
\begin{eqnarray}
| f_{X_1}(\eta)|  &\leq &2 + 2 \int_{0}^{+\infty} 2 x \eta^2 e^{ \eta^2 x^2 } \mathbb{P}( | X_1+X_2 - \mu | \geq x) dx \\
 &\leq & 2   f_{X_1+X_2 - \mu}(\eta) 
\end{eqnarray}
The latter inequality is finite as $X_1+X_2 - \mu$ is normal so that $| f_{X_1}(\eta)| \leq \infty$. In addition, since we have $\phi_{X_1} \phi_{X_2} = \phi_{X_1+X_2}$ by independence, the characteristic function of $X_1$ is never equal to zero in the complex domain since $\phi_{X_1+X_2}$ is never null in $\mathbb{C}$. Using lemma \ref{lemma3}, we conclude that $X_1$ is normal. If $X_1$ is of null variance, then $X_2= X_2+X_1 - X_1$ is a normal as the sum of a normal and a constant. Otherwise, $X_1$'s variance is strictly positive and $X_1$ has no atom. Interchanging $X_1$ and $X_2$ and using previous reasoning, we get immediately that $X_2$ follows also a normal distribution, which concludes the proof.
\end{proof}

\section{An extension of the Levy Cramer theorem}
It is a well known and easy to prove result (using for instance characteristic function) that if $X_1$ and $X_2$ are independent normal standard variables, then $X_1 + X_2$ and $X_1 - X_2$ are independent. But in fact, we can prove somehow the opposite that requires less assumption than the Levy Cramer theorem. This is the following result

\begin{theorem}
If $X_1$ and $X_2$ are two independent symmetric variables with mean 0 and variance 1, such that $X_1 + X_2$ and $X_1 - X_2$  are independent, then $X_1$ and $X_2$ follow a normal distribution
\end{theorem}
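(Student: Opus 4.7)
The approach I would take is the classical characteristic-function method going back to Kac/Bernstein. Let $\phi(t)=E[e^{itX_1}]$ and $\psi(t)=E[e^{itX_2}]$. Symmetry forces both to be real and even, and the standing moment assumptions give $\phi(0)=\psi(0)=1$, $\phi'(0)=\psi'(0)=0$, $\phi''(0)=\psi''(0)=-1$. The plan is to convert the independence of $X_1+X_2$ and $X_1-X_2$ into a functional equation for $\phi$ and $\psi$, show that the equation forces $\phi=\psi$, and then reduce to a quadratic functional equation whose only continuous solution compatible with variance $1$ is $t\mapsto e^{-t^2/2}$.

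First I would compute the joint characteristic function of $(X_1+X_2,X_1-X_2)$ using the independence of $X_1$ and $X_2$,
\begin{equation*}
E\bigl[e^{is(X_1+X_2)+it(X_1-X_2)}\bigr]=\phi(s+t)\psi(s-t),
\end{equation*}
and compare with the product of the two marginals $\phi(s)\psi(s)\cdot\phi(t)\psi(t)$ (using evenness of $\psi$). The hypothesis thus becomes the identity
\begin{equation*}
\phi(s+t)\psi(s-t)=\phi(s)\psi(s)\phi(t)\psi(t)\qquad\forall s,t\in\mathbb{R}.
\end{equation*}
Specializing $s=t$ gives $\phi(2s)=\phi(s)^2\psi(s)^2$, and specializing $t=-s$ gives $\psi(2s)=\phi(s)^2\psi(s)^2$. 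Therefore $\phi=\psi$, i.e.\ $X_1$ and $X_2$ already have the same distribution, and the master equation collapses to
\begin{equation*}
\phi(s+t)\phi(s-t)=\phi(s)^2\phi(t)^2.
\end{equation*}

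Next I would show that $\phi$ never vanishes, so that $L(t)=\log\phi(t)$ is well defined. The relation $\phi(2s)=\phi(s)^4$ shows $\phi\geq 0$ everywhere; if $\phi(t_0)=0$ then iterating $\phi(t_0/2^n)^4=\phi(t_0/2^{n-1})$ produces a sequence of zeros tending to $0$, contradicting $\phi(0)=1$ and continuity. Hence $\phi>0$ and $L$ is continuous. Taking logarithms in the master equation yields the d'Alembert/Jensen quadratic functional equation
\begin{equation*}
L(s+t)+L(s-t)=2L(s)+2L(t),
\end{equation*}
with $L(0)=0$ and $L$ even.

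The main technical step is to conclude from this equation (together with continuity, which is free for characteristic functions) that $L(t)=ct^2$. I would do this by setting $s=nt$ and inducting to get $L(nt)=n^2 L(t)$ for integers $n$, extending to rationals via $L(t/q)=L(t)/q^2$, and finally to reals by continuity; evenness and $L(0)=0$ remove the linear and constant parts that a general solution of a Cauchy-type equation could contribute. Once $L(t)=ct^2$ is established, the condition $\phi''(0)=-1$ (equivalently $L''(0)=-1$) forces $c=-\tfrac12$, so $\phi(t)=e^{-t^2/2}$, which is the standard normal characteristic function. The hardest part of the argument is really this last solution of the Jensen equation in the continuous (rather than smooth) class; everything before it is essentially algebraic manipulation of the independence hypothesis.
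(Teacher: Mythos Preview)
Your argument is correct and close in spirit to the paper's, but the execution diverges in two places worth noting.

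First, you exploit symmetry at the outset to make $\phi,\psi$ real and even, and you cleanly derive $\phi=\psi$ by specializing the two-variable identity at $s=t$ and $t=-s$. The paper instead writes $\phi_{X_1+X_2}(t)=\phi_{X_1}(t)^2$ without first establishing that $X_1$ and $X_2$ are identically distributed, and recovers realness only later by studying the ratio $\gamma(t)=\phi_{X_1}(t)/\overline{\phi_{X_1}}(t)$, showing $\gamma(2t)=\gamma(t)^2$ and passing to the limit $(1+o(\cdot))^{2^n}\to 1$. Your route is tidier here and actually closes a gap in the paper's presentation.

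Second, and this is the main methodological difference, the endgames are different. The paper keeps only the dyadic relation $\phi(2t)=\phi(t)^4$, iterates to $\phi(t)=\phi(t/2^n)^{4^n}$, and plugs in the second-order expansion $\phi(u)=1-u^2/2+o(u^2)$ to obtain $\phi(t)=\lim_n\bigl(1-\tfrac{t^2}{2\cdot 4^n}+o(4^{-n})\bigr)^{4^n}=e^{-t^2/2}$ directly, with no appeal to functional-equation theory. You retain the full two-variable identity $\phi(s+t)\phi(s-t)=\phi(s)^2\phi(t)^2$, pass to logarithms to get the quadratic (Jensen) equation $L(s+t)+L(s-t)=2L(s)+2L(t)$, and solve it by the standard $\mathbb{Z}\to\mathbb{Q}\to\mathbb{R}$ continuity argument. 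The paper's dyadic-limit trick is shorter and avoids the extra lemma; your approach is more structural and makes explicit why only quadratic $L$ can occur.
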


\begin{remark}
Compared to the Levy Cramer theorem, we do not assume in the hypothesis that any of the variable is normal which makes the result quite remarkable.
\end{remark}

\begin{proof}
Noticing that $2 X_1 = (X_1 + X_2) +( X_1 - X_2)$, we have using characteristic function and independence that 
$$\phi_{2 X_1}(t) = \phi_{X_1+X_2}(t) \phi_{X_1 - X_2}(t)$$
Since $X_1$ and $X_2$  are two symmetric independent variables, we have that \\
$\phi_{X_1+X_2}(t)  = \phi_{X_1}(t)^2 $  and likewise $\phi_{X_1 - X_2}(t)  = \phi_{X_1}(t) \phi_{\bar{X_1}}(t) $
\\\\
We also have that 
$\phi_{2 X_1}(t) = \phi_{X_1}(2 t) $ and 
$ \phi_{\bar{X_1}}(t)  = \overline{\phi_{X_1}}(t)$.
\\
\\
Combining all results, we get
\begin{equation}\label{prop2_eq1}
\phi_{X_1}(2 t) = \phi_{X_1}(t) ^3  \overline{\phi_{X_1}}(t) 
\end{equation}

The convex conjugate of previous equation writes $ \overline{\phi_{ X_1}}(2 t) = \overline{\phi_{X_1}}(t)^3 \phi_{X_1}(t)$, which leads to

\begin{equation}\label{prop2_eq3}
\phi_{ X_1}(2 t)  \overline{\phi_{ X_1}}(2 t) = \phi_{X_1}(t)^4  \overline{\phi_{X_1}}(t)^4
\end{equation}

Denoting by $\psi(t) = \phi_{X_1}(t) \overline{\phi_{X_1}}(t)$, equation (\ref{prop2_eq3}) states that $\psi(2 t) = \psi(t)^4$ or iteratively

\begin{equation}\label{prop2_eq5}
\psi(t) = \psi(\frac{t}{2^n})^{2^n} 
\end{equation}

Clearly, $\phi_{X_1}(t) \neq 0$ for all $t \in\mathbb{R}$, since if it was the case for a given $t_0 \in \mathbb{R}$, then $\psi(t_0)=0$  (as $\psi(t_0) = \phi_{X_1}(t_0)  \overline{\phi_{X_1}}(t_0)$), which would imply that for all $n \in \mathbb{N}$, we would have $\psi(\frac{t}{2^n})=0$, which would imply in particular the following limit 
$\lim\limits_{n  \rightarrow \infty} \psi(\frac{t}{2^n}) = 0$ which will be in contradiction with the continuity of the characteristic function and the fact that $\phi_{X_1}(0)=1$.
Hence, for all $t \in\mathbb{R}$, we have $\phi_{X_1}(t) \neq 0$ and $ \overline{\phi_{X_1}}(t)  \neq 0$. We can define safely
$$\gamma(t) = \frac{ \phi_{X_1}(t) } {\overline{\phi_{X_1}}(t) }.$$
Doing basic computations, we have
\begin{eqnarray}
\gamma(2 t) &= &\frac{ \phi_{X_1}(2 t) } {\overline{\phi_{ X_1}}(2 t) }  =  \frac{ \phi_{X_1}(t) ^3  \overline{\phi_{X_1}}(t)}  {\overline{\phi_{X_1}}(t)^3 \phi_{X_1}(t) }  =  \frac{ \phi_{X_1}(t) ^2 } {\overline{\phi_{X_1}}(t) ^2 }  =  \gamma(t)^2 
\end{eqnarray}
Applying previous equation iteratively, we get
\begin{eqnarray}\label{prop2_eq6}
\gamma( t) = \gamma(\frac{t}{2^n})^{2^n} 
\end{eqnarray}
$\gamma(t)$ 's value in 0 is 1 as $ \phi_{X_1}(0)=1$. As $X_1$ is a variable with zero mean and unit variance, we have that $\phi_{X_1}(t)=1 - \frac{t^2}{2} + o(t^2)$. Hence, $\gamma( t) = 1+ o(t^2)$, which implies in particular that $\forall t \in \mathbb{R}$:
\begin{eqnarray}\label{prop2_eq6}
\gamma( t) = \gamma(\frac{t}{2^n})^{2^n} = \lim\limits_{n  \rightarrow \infty} \gamma(\frac{t}{2^n})^{2^n} =  \lim\limits_{n  \rightarrow \infty} \left( 1+ o(\frac{t}{2^n})\right)^{2^n} = 1
\end{eqnarray}

By definition of $\gamma(t)$, $\phi_{X_1}$ is a real function. Equation (\ref{prop2_eq1}) becomes $\phi_{X_1}(2 t) = \phi_{X_1}(t) ^4$ or iteratively $\phi_{X_1}(t) = \phi_{X_1}(\frac{t}{2^n})^{2^n} $. Remember that $\phi_{X_1}(t)=1 - \frac{t^2}{2} + o(t^2)$ as $X_1$ is a variable with zero mean and unit variance. Hence, we get
\begin{eqnarray}\label{prop2_eq6}
\phi_{X_1}( t) =  \lim\limits_{n  \rightarrow \infty} \phi_{X_1}(\frac{t}{2^n})^{2^n} =  \lim\limits_{n  \rightarrow \infty} \left(1 - \frac{1}{2} \frac{t^2}{2^{n}} + o(t^2) \right)^{2^n} = \exp(- \frac{t^2}{2})
\end{eqnarray}
This shows that $X_1$ 's characteristic function is the one of a normal distribution with zero mean and unit variance, which concludes the proof.
\end{proof}


\section{Independence between sample mean and variance and normal distribution}
We finally tackle the question of the condition for the sample mean and variance to be independent. This is a strong result that for instance enables us to derive the Student distribution as in the normal case of i.i.d. variables, the sample mean and variance are clearly independent. We are interested in the opposite. What is the condition to impose on our distribution for iid variable to make our sample mean and variance independent? We shall prove that it is only in the case of normal distribution that these two estimators are independent as the following proposal states

\begin{proposition}
The sample mean and variance are independent if and only if the underlying (parent) distribution is normal.
\end{proposition}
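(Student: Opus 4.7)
The easy direction (normal implies independence of $\bar X$ and $S^2$) I would dispose of quickly by the standard Helmert/orthogonal transformation: if the $X_i$ are i.i.d.\ Gaussian, rotate to coordinates $Y_1=\sqrt n\,\bar X$ and $Y_2,\dots,Y_n$ orthogonal to $(1,\dots,1)/\sqrt n$; the $Y_i$ remain independent Gaussians, $\bar X$ is a function of $Y_1$ alone, and $(n-1)S^2=\sum_{i\ge 2}Y_i^2$ is a function of $Y_2,\dots,Y_n$.

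For the substantive direction, I would work with the log characteristic function, as the remark in the introduction promises. Let $\phi$ denote the common characteristic function of the $X_i$, assume finite variance $\sigma^2$ (forced by the existence of $S^2$), and start from the moment identity that independence of $\bar X$ and $S^2$ yields for the product $e^{it\bar X}\cdot S^2$:
$$
\mathbb{E}\bigl[e^{it\bar X}\,\textstyle\sum_{i=1}^n (X_i-\bar X)^2\bigr]=(n-1)\sigma^2\,\phi(t/n)^n.
$$
Expanding $\sum(X_i-\bar X)^2=\sum X_i^2-n\bar X^2$ and using that the $X_i$ are i.i.d., I would compute the left-hand side in terms of $\phi$, $\phi'$, $\phi''$ (here $\mathbb{E}[X_j^2 e^{isX_j}]=-\phi''(s)$ is the key one-variable identity). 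After collecting terms and dividing by a common factor of $(n-1)\phi(t/n)^{n-2}$, setting $s=t/n$, the whole relation collapses to the simple functional equation
$$
\phi'(s)^2-\phi(s)\phi''(s)=\sigma^2\,\phi(s)^2.
$$

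Now the trick that makes the proof short: on a neighbourhood of $0$ the characteristic function is nonzero, so $K:=\log\phi$ is well defined there (invoking Lemma~\ref{lemma1} in spirit), and a direct computation gives $\phi'^2-\phi\phi''=-K''\phi^2$. The displayed equation therefore reduces to $K''(s)=-\sigma^2$, a trivial linear ODE. Integrating with the initial conditions $K(0)=0$ and $K'(0)=i\mu$ (read off from $\phi(0)=1$ and $\mathbb{E}[X]=\mu$) gives $K(s)=i\mu s-\tfrac{1}{2}\sigma^2 s^2$, i.e.\ $\phi(s)=\exp(i\mu s-\tfrac12\sigma^2 s^2)$. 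Uniqueness of characteristic functions then identifies the parent law as Gaussian.

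The main obstacles I anticipate are technical rather than conceptual. First, one must justify that $S^2$ has a finite mean (needed to differentiate the product identity in $u$ at $u=0$) — but this is automatic once finite variance of $X$ is assumed. Second, passing from $\log\phi$ defined near $0$ to a conclusion valid for all $s\in\mathbb{R}$ requires arguing that $\phi$ does not vanish anywhere: having solved the ODE locally to get $\phi(s)=\exp(i\mu s-\tfrac12\sigma^2 s^2)$ on an interval around $0$, one then uses a standard connectedness/maximality argument on the largest interval where $\phi$ is nonzero, together with continuity of $\phi$, to rule out a first zero and thereby extend the identity to the whole line. The algebraic simplification leading to $K''=-\sigma^2$ is where I would expect most of the bookkeeping, but there is no deep difficulty.
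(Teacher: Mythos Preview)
Your proposal is correct and follows essentially the same route as the paper: both reduce the independence assumption (via one differentiation in the $S^2$-direction at $0$) to the identity $\phi'(s)^2-\phi(s)\phi''(s)=\sigma^2\phi(s)^2$, recognize this as $(\log\phi)''=-\sigma^2$, and integrate with the obvious initial data. If anything you are more careful than the paper, which simply writes $\Psi=\ln\phi$ globally without addressing the nonvanishing of $\phi$ that you handle via the maximal-interval/connectedness argument, and which does not spell out the easy direction.
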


\begin{remark} This result was first proved by \cite{Geary_1936} and later by \cite{Lukacs_1942}. We provide a new proof which is simpler as we work with the log characteristic function and the unbiased sample variance. This makes the resulting differential equation trivial to solve as this is just a constant second order derivative constraint. 

This result implies consequently that it will not be easy to derive the underlying distribution of the t-statistic for a non normal distribution. Indeed the t-statistic is defined as the ratio of the sample mean over the sample variance. If the sample mean and sample variance are not independent, the computation of the underlying distribution does not decouple. This makes the problem of the computation of the underlying distribution an integration problem that has no closed form. This kills in particular any hope to derive trivially other distribution that generalizes the case of the Student distribution to non normal underlying assumptions.
\end{remark}

\begin{proof}
The assumption of i.i.d. sample for  $(x_1,\ldots, c_n)$ implies that the joint distribution of $(x_1,\ldots, x_n)$ denoted by
$f_{X_1,\ldots,X_n}(x_1,\ldots, x_n)$ is equal to $\prod_{i=1}^n f_X(x_i)$, which we will write $\prod_{i=1}^n f(x_i)$ dropping the $._{X}$ to make notation lighter.

The log of the characteristic function of the joint variable $(\bar X_n,s_n^2)$ writes
\begin{equation}
\ln( \phi_{(\bar X_n,s_n^2)}(t_1, t_2) ) = \ln \left( \iiint e^{i t_1 \bar x_n + i t_2 s_n^2} \prod_{i=1}^n f(x_i) dx_i \right).
\end{equation}

Similarly, the log of the characteristic function for the sample mean $\bar X_n$  writes 
\begin{equation}
\ln( \phi_{\bar X_n} (t_1) ) =\ln \left(  \iiint e^{i t_1 \bar x_n } \prod_{i=1}^n f(x_i) dx_i \right),
\end{equation}
and similarly for the sample variance
\begin{equation}
\ln( \phi_{s_n^2}(t_2) )= \ln \left(  \iiint e^{ i t_2 s_n^2} \prod_{i=1}^n f(x_i) dx_i \right).
\end{equation}

The assumption of independence between  sample mean $\bar X_n$ and variance $s_n^2$ is equivalent to the fact that the characteristic function of the couple decouples, or that the log characteristic functions sum up.

\begin{equation} \label{condition_1}
\ln( \phi_{(\bar X_n,s_n^2)}(t_1, t_2) ) = \ln( \phi_{\bar X_n} (t_1) ) + \ln( \phi_{s_n^2}(t_2) ).
\end{equation}

Differentiating condition \ref{condition_1} with respect to $t_2$ in $t_2=0$ leads to
\begin{equation} \label{condition_2}
\frac{1}{\phi_{(\bar X_n,s_n^2)}(t_1, t2)} \left. \frac{ \partial \phi_{(\bar X_n,s_n^2)}(t_1, t_2) }{\partial t_2} \right|_{t_2=0} =\frac{1}{\phi_{s_n^2}(t2)}  \left. \frac{ \partial   \phi_{s_n^2}(t_2) }{\partial t_2}  \right|_{t_2=0}.
\end{equation}

Noticing that $\phi_{s_n^2}(0) = 1$ and $\phi_{(\bar X_n,s_n^2)}(t_1, 0) = \phi_{\bar X_n} (t_1)$, the condition \ref{condition_1} writes
\begin{equation} \label{condition_2}
\frac{1}{\phi_{\bar X_n}(t_1)} \left. \frac{ \phi_{(\bar X_n,s_n^2)}(t_1, t_2) }{\partial t_2} \right|_{t_2=0} =\left. \frac{ \partial   \phi_{s_n^2}(t_2) }{\partial t_2}  \right|_{t_2=0}.
\end{equation}

Using the fact that $\bar X_n= \frac{ \sum_{i=1}^n X_i}{n}$, it is easy to see that 

\begin{equation}
 \phi_{\bar X_n} (t_1) = \prod _{i=1}^n \int e^{i t_1 x_i / n} f(x_i) dx_i = [\phi_{X} (t_1 / n)] ^n
\end{equation}

For the sample variance, we can use the "U-statistic" (or symmetric) form  to see that 
\begin{equation}
s_n^2 = \frac{\sum_{i =1}^n X_i^2} {n}  - \frac{\sum_{i \neq j} X_i X_j}{n (n-1)}
\end{equation}

Hence, the derivative of the characteristic function of the couple $(\bar X_n, s_n^2)$ writes 
\begin{align}
\left. \frac{ \partial \phi_{(\bar X_n,s_n^2)}(t_1, t_2) }{\partial t_2} \right|_{t_2=0}  & = \iiint i s_n^2  \prod_{i=1}^n e ^{ i t_1 x_i / n } f(x_i) dx_i   \\
& = i \iiint \left( \frac{\sum_{i =1}^n x_i^2} {n}-  \frac{\sum_{i \neq j} x_i x_j}{n (n-1)}  \right)  \prod_{i=1}^n e ^{ i t_1 x_i / n } f(x_i) dx_i   \\
& = i  [\phi_{X}(\frac{t_1}{n})] ^{n-2} \left( \phi_{X} (\frac{t_1}{n}) \int x^2 e ^{\frac{i  t_1 x}{n}} f(x) dx - (\int x^2 e ^{\frac{i t_1 x}{n}} f(x) dx )^2  \right) 
\end{align}

In the latter equation, if we set $t_1=0$, we get in particular that 
\begin{align}
\left. \frac{ \partial \phi_{s_n^2}(t_1, t_2) }{\partial t_2} \right|_{t_2=0}  & =\left. \frac{ \partial \phi_{(\bar X_n,s_n^2)}(0, t_2) }{\partial t_2} \right|_{t_2=0}  =i \sigma^2
\end{align}

Hence, condition (\ref{condition_2}) writes
\begin{equation} \label{condition_3}
\frac{ \phi_{X} (\frac{t_1}{n}) \int x^2 e ^{\frac{i t_1 x}{n}} f(x) dx - (\int x^2 e ^{\frac{i t_1 x}{n}} f(x) dx )^2 }{[\phi_{X}(\frac{t_1}{n})]^2} = \sigma^2
\end{equation}

We also have that the derivative of the characteristic function $\phi_{X} (\frac{t_1}{n})$ with respect to $u = t_1 / n$ gives
\begin{align}\label{derivative_2}
\frac{ \partial \phi_{X} (u)}{\partial u} & =  \int  i x e ^{i x u} f(x) dx
\end{align}

To simplify notation, we drop the index in $\phi_X$ and writes this function $\phi$. Using equation  (\ref{derivative_2}), condition (\ref{condition_3}) writes 
\begin{align}\label{condition_4}
\frac{ - \phi (u) \frac{ \partial^2 \phi (u)}{\partial u^2} + \left( \frac{ \partial \phi (u)}{\partial u} \right)^2}{\phi (u)^2}  = \sigma^2
\end{align}

The log of the characteristic function of $\phi(u) = \mathbb{E}[ e^{i u X} ]$, denoted by $\Psi(u) =\ln \phi (u)$, first and second derivatives with respect to $u$ are given by:
\begin{align}
\frac{ \partial \Psi(u)}{\partial u} & = \frac{ \partial \ln \phi (u)}{\partial u}  = \frac{1}{ \phi (u)} \frac{ \partial  \phi (u)}{\partial u} &  \quad\\
\frac{ \partial^2 \Psi(u)}{\partial^2 u} & =   \frac{ \partial }{\partial u}  \frac{ \partial \Psi(u)}{\partial u} =  \frac{1}{ \phi (u)} \frac{ \partial^2  \phi (u)}{\partial u^2}  - \frac{1}{ \phi (u)^2 }  \left( \frac{ \partial  \phi (u)}{\partial u} \right)^2 &  \quad
\end{align}

Hence,  condition (\ref{condition_4}) writes 
\begin{align}\label{condition_5}
\frac{ \partial^2 \Psi(u)}{\partial^2 u}   = -\sigma^2
\end{align}

Using the boundary conditions $ \Psi(0)= 0$ and $\Psi^{'}(0)= i  \mathbb{E}[X] = i \mu$, it is easy to integrate condition (\ref{condition_5}) which is a constant second order derivative to get
\begin{align}\label{condition_6}
\Psi(u)  =i  \mu u  - \frac{ \sigma^2 u^2}{2}
\end{align}

Condition (\ref{condition_6}) states that a necessary and sufficient condition for the sample mean and variance to be independent is that the log characteristic function of $X$ is a quadratic form. But a quadratic form for the log characteristic function of $X$  is a characterization of a normal distribution, which concludes the proof.
\end{proof}

\section{Conclusion}
In this paper, we have presented three remarkable properties of the normal distribution: first that if $X_1$ and $X_2$ are two independent variables with $X_1 + X_2$ normally distributed, then both $X_1$ and $X_2$ follow a normal distribution, which is referred to as the Cramer theorem, second a variation of the Levy Cramer theorem that states that if $X_1$ and $X_2$ are independent symmetric random variables with finite variance with $X_1+X_2$ and $X_1 - X_2$ independent, then both $X_1$ and $X_2$ are normal, and third that the normal distribution is characterized by the fact that it is the only distribution for which the sample mean and variance are independent (which is a central property for deriving the Student distribution and referred as the Geary theorem (see \cite{ Geary_1936})). 
\\\\
The novelty of this paper has been to provide new, quicker or self contained proofs of theses theorems. In particular, we revisited the proof first provided by \cite{Feller_1971} to give full details of it as the sketch of the proof provided was rather elusive on technical details in complex analysis. Also using the log characteristic function turns out to provide a quicker proof of the result on the characterization of the normal distribution as the only distribution for which sample mean and variance are independent.

\bibliography{mybib}

\end{document}